\newtheorem{theorem}{Theorem}
\newtheorem{proposition}{Proposition}
\theoremstyle{definition}
\newtheorem{definition}{Definition}
\theoremstyle{remark}
\newtheorem{remark}{Remark}
\newcommand{\Z}{\mathbb{Z}}
\newcommand{\C}{\mathbb{C}}
\newcommand{\N}{\mathbb{N}}
\newcommand{\R}{\mathbb{R}}
\renewcommand{\S}{\mathbb{S}}
\newcommand{\tr}{\operatorname{tr}}
\begin{document}

\title[Expected energy on flat tori]{Expected Riesz energy of some 
determinantal processes on flat tori}

\author{Jordi Marzo and Joaquim Ortega-Cerd\`a}

\address{Departament de Matem\`atiques i Inform\`atica, Universitat de 
Barcelona \& Bar\-ce\-lo\-na Graduate School of Mathematics, Gran Via 585, 
08007, Barcelona, Spain}
\email{jmarzo@ub.edu}

\address{Departament de Matem\`atiques i Inform\`atica, Universitat de 
Barcelona \& Bar\-ce\-lo\-na Graduate School of Mathematics, Gran Via 585, 
08007, Barcelona, Spain}
\email{jortega@ub.edu}

\keywords{Riesz energy, Epstein zeta function, Determinantal processes, Torus, 
Rearrangement inequality} 
\thanks{This research has been partially supported by the MTM2014-51834-P grant 
by the Ministerio de Econom\'{\i}a y Competitividad, Gobierno de Espa\~na and by 
the Ge\-ne\-ra\-li\-tat de Catalunya (project 2014 SGR 289).}

\begin{abstract} We compute the expected Riesz energy of random points on flat 
tori drawn from certain translation invariant determinantal processes and 
determine the process in the family providing the optimal asymptotic expected 
Riesz energy. 
\end{abstract}

\maketitle

\section{Introduction}

Our objective is to study the asymptotics of the expected Riesz energy of 
certain point processes (random finite point configurations) in a flat torus 
$\Omega\subset \R^d$. If $\Lambda$ is a lattice in $\R^d$ (i.e. $\Lambda=A\Z^d$ 
for some nonsingular square matrix $A$) we identify the fundamental domain 
$$\Omega=\{ t_1 v_1+\dots +t_d v_d : t_1,\dots, t_d \in [0,1)\},$$ where the 
column vectors of $A=[v_1,\dots , v_d]$ is a $\Z$ basis of the lattice 
$\Lambda,$ with the flat torus $\R^d / \Lambda$.

For the sphere $\S^2,$ the authors in \cite{ASZ14} estimate asymptotically the 
expected energy of points of the, so-called, spherical ensemble. In 
\cite{BMOC16}, the authors study the harmonic ensemble in $\S^d$ and prove, in 
some cases, the optimality of the expected asymptotic energy of this process 
among rotation invariant determinantal processes. In both cases, the 
expected asymptotic energy was used to get upper bounds for the minimal Riesz 
energy. Here, we study also the optimality of the expected asymptotic energy 
among a collection of determinantal processes invariant under translations and 
it turns out that the best process can be found as an easy consequence of 
Riesz's rearrangement inequality.

This provides explicit examples with the lowest energy bounds known on the 
torus in high dimensions. 
\subsection{Riesz energy}

To define the Riesz energy in this periodic setting we follow 
\cite{HSSS14,HSSS15}, see also \cite[Section 9]{CK07}. Given a lattice 
$\Lambda=A\Z^d\subset \R^d$ the Epstein Hurwitz zeta function for $\Lambda$ is 
defined, for $s>d,$ as $$\zeta_\Lambda(s;x)=\sum_{v\in \Lambda} 
\frac{1}{|x+v|^s},\;\;\;x\in \R^d.$$ Observe that $\zeta_\Lambda(s;x)$ is the 
$\Lambda$-periodic potential generated by the Riesz s-energy $|x|^{-s}$. 

When $s\le d$ the sum above is infinite for all $x\in \R^d$. For fixed $x\in 
\R^d\setminus \Lambda$ define the function
\begin{equation}		\label{analytic}
F_{s,\Lambda}(x)=\sum_{v\in \Lambda}\int_1^{+\infty} e^{-|x+v|^2 
t}\frac{t^{\frac{s}{2}-1}}{\Gamma\left(  \frac{s}{2} \right)}dt+
\frac{1}{|\Lambda|}\sum_{w\in \Lambda^*\setminus \{0\}} 
e^{2\pi i \langle x,w \rangle} \int_0^1 
\frac{\pi^{d/2}}{t^{d/2}}e^{-\frac{\pi^2 |w|^2}{t}} 
\frac{t^{\frac{s}{2}-1}}{\Gamma\left(  \frac{s}{2} \right)}dt, 
\end{equation}
where $\Lambda^*=\{ x\in \R^d :\forall \lambda \in \Lambda\;\; \langle 
x,\lambda \rangle \in \Z \}=(A^t)^{-1}\Z^d$ is the 
dual lattice and $|\Lambda|=|\det A|$ is the co-volume of $\Lambda$.

Then, $F_{s,\Lambda}(x)$ is an entire function of $s$ and therefore by the 
relation 
$$F_{s,\Lambda}(x)=\zeta_\Lambda(s;x)+\frac{2\pi^{d/2}|\Lambda|^{-1}}{
\Gamma\left(  \frac{s}{2} \right)(d-s)},\;\;s>d,$$ we obtain a analytic 
continuation of $\zeta_\Lambda(s;x)$ to $s\in \C\setminus \{ d\}$. Observe that 
the function $1/\Gamma(s)$ is entire and that all the sums in (\ref{analytic}) 
converge uniformly. We are interested in the range $0<s<d$.

For $\omega=(x_1,\dots , x_N) \in \Omega^N$ define, for $0<s<d,$ the periodic 
Riesz $s$-energy of $\omega$ by $$E_{s,\Lambda}(\omega)=\sum_{k\neq j} 
F_{s,\Lambda}(x_k-x_j),$$ and the minimal periodic Riesz $s$-energy by 
$$\mathcal{E}_{s,\Lambda}(N)=\inf_{\omega \in (\R^d)^N} E_{s,\Lambda}(\omega).$$

\subsection{Determinantal processes}

For the introductory background we follow \cite[Chap.~4]{GAF}. 

We denote as $\mathcal{X}$ a (simple) random point process in a compact set 
$\Omega\subset \R^d$. And let $\mu$ be the normalized Lebesgue measure. A way 
to 
describe the process is to specify the random variable counting the number of 
points of the process in $D,$ for all Borel sets $D\subset \Omega$. We denote 
this random variable as $\mathcal{X}(D)$.

These point processes are characterized by their joint intensity functions 
$\rho_k$ 
in $\Omega^k$ satisfying
that 
\[
\mathbb{E}\left[ \mathcal{X}(D_1)\cdots \mathcal{X}(D_k) \right] 
=\int_{D_1\times \dots \times D_k} \rho_k(x_1,\dots, x_k) d\mu(x_1)\dots 
d\mu(x_k),
\]
for any family of mutually disjoint subsets $D_1,\dots ,D_k \subset \Omega$. 
We assume that $\rho_k(x_1,\dots, x_k)=0$ 
when $x_i=x_j$ for $i\neq j$.

A random point process is called determinantal with kernel 
$K:\Omega\times \Omega \rightarrow \C,$ if it is simple and the joint 
intensities
with respect to a background measure $\mu$ are given by
\[
\rho_k(x_1,\dots, x_k)=\det (K(x_i,x_j))_{1\le i,j\le k}, 
\]
for every $k\ge 1$ and $x_1,\dots, x_k \in \Omega$.

To define the processes we will consider only projection kernels.

\begin{definition}
 We say that $K$ is a projection kernel if it is a Hermitian projection kernel, 
i.e. the integral operator in $L^2(\mu)$ with kernel $K$ is self-adjoint and 
has 
eigenvalues $1$ and $0$.
\end{definition}

By Macchi-Soshnikov's theorem
\cite[Theorem 4.5.5]{GAF},
a projection kernel $K(x,y)$ defines a determinantal process and it has 
$N$ points almost surely if the trace for the corresponding integral operator 
equals $N$, i.e.\ if
\[
 \int_{\Omega}K (x,x)d\mu(x)=N.
\]
Observe that the random vector in $\Omega^N$ generated with density 
$$\frac{1}{N!}\det 
(K(x_i,x_j))_{1\le i,j\le k},$$
is a determinantal process with the right 
marginals i.e. the joint intensities are given by determinants of the kernel 
\cite[Remark~4.2.6]{AGZ}.

Given now a function $f:\Omega\times\Omega\rightarrow[0,\infty)$ 
it is easy to compute the 
expected pair potential energy, \cite[Formula (1.2.2)]{GAF}:

\begin{proposition}\label{th:determinantal}
Let $K(x,y)$ be a projection kernel with trace $N$ in $\Omega$
and let 
$\omega=(x_1,\ldots,x_N)\in \Omega^N$ be $N$ random points generated by the 
corresponding determinantal point process. Then, for any 
measurable $f:\Omega\times\Omega\rightarrow[0,\infty)$ we have
\[
 \mathbb{E}_{\omega\in \Omega^N}\left(\sum_{i\neq j}f(x_i,x_j)\right)= 
\int_{x,y\in\Omega}\left(K(x,x)K(y,y)-|K(x,y)|^2\right)f(x,y)\,
d\mu (x)\,d\mu (y).
\]
\end{proposition}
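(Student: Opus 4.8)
The plan is to reduce the expectation of a sum over pairs to an integral against the second joint intensity $\rho_2$, and then to use the determinantal structure of the process to express $\rho_2$ in terms of the kernel $K$. First I would recall the standard identity relating the expected value of a symmetric sum $\sum_{i\neq j}f(x_i,x_j)$ over the (almost surely exactly $N$) points of the process to the two-point correlation function: namely
$$
\mathbb{E}_{\omega}\Bigl(\sum_{i\neq j}f(x_i,x_j)\Bigr)=\int_{\Omega\times\Omega}f(x,y)\,\rho_2(x,y)\,d\mu(x)\,d\mu(y).
$$
This is exactly the $k=2$ instance of the defining property of the joint intensities, extended from indicator functions of disjoint Borel sets to general nonnegative measurable $f$ by the usual measure-theoretic ladder (indicators, then simple functions, then monotone convergence); the hypothesis $f\geq 0$ is what makes the monotone convergence step legitimate and allows the value $+\infty$ without any integrability assumption. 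The assumption $\rho_2(x,x)=0$ on the diagonal is harmless here since $\mu\times\mu$ gives the diagonal measure zero, but it is consistent with the fact that the sum on the left omits the terms $i=j$.

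Next I would insert the determinantal formula for $\rho_2$. Since $K$ is a projection (hence Hermitian) kernel, $K(y,x)=\overline{K(x,y)}$, so
$$
\rho_2(x,y)=\det\begin{pmatrix}K(x,x)&K(x,y)\\K(y,x)&K(y,y)\end{pmatrix}=K(x,x)K(y,y)-K(x,y)K(y,x)=K(x,x)K(y,y)-|K(x,y)|^2.
$$
Substituting this into the integral gives precisely the claimed formula. I would also note, for completeness, that by Macchi–Soshnikov the projection kernel of trace $N$ does define a determinantal process with exactly $N$ points almost surely, so the left-hand side is a genuine finite-sum expectation and the density description $\frac1{N!}\det(K(x_i,x_j))$ is available as an alternative route to the same computation.

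The only real subtlety, and the step I would treat with a little care, is the passage from the intensity identity stated only for mutually disjoint Borel sets $D_1,D_2$ to arbitrary nonnegative measurable $f(x,y)$ on all of $\Omega\times\Omega$, including the diagonal. One approximates $f$ from below by simple functions built from products $\mathbf{1}_{D_1}(x)\mathbf{1}_{D_2}(y)$; for the off-diagonal part one may always refine a partition of $\Omega$ so that the relevant rectangles use disjoint sets, and the diagonal contributes nothing on either side — to the right because $(\mu\times\mu)(\mathrm{diagonal})=0$, and to the left because $\sum_{i\neq j}$ excludes coincident indices. Everything else is the elementary $2\times 2$ determinant expansion above, so there is no genuine obstacle beyond this bookkeeping.
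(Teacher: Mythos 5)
Your argument is correct and is essentially the route the paper itself takes: the paper gives no separate proof but simply invokes the standard identity $\mathbb{E}\bigl(\sum_{i\neq j}f(x_i,x_j)\bigr)=\int f\,\rho_2\,d\mu\,d\mu$ (citing Formula (1.2.2) of the Hough--Krishnapur--Peres--Vir\'ag book) together with $\rho_2(x,y)=K(x,x)K(y,y)-|K(x,y)|^2$ for a Hermitian kernel, which is exactly your computation. Your extra care about extending from disjoint Borel sets to general nonnegative measurable $f$ and about the negligible diagonal is sound and just makes explicit what the citation encapsulates.
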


\subsubsection{Flat torus}

In our setting we take as $\Omega\subset \R^d$ the flat torus $\R^d/\Lambda,$ 
for some lattice $\Lambda$ with dual $\Lambda^*$.

To construct the kernel we consider for $w\in \Lambda^*$, the Laplace-Beltrami 
eigenfunctions $f_w(u)=e^{2\pi i \langle u, w \rangle }$ of eigenvalue $-4 
\pi^2 
\langle w, w\rangle$. Then $$\Delta f_w + 4 \pi^2 \langle w, w\rangle f_w=0,$$ 
and $\{ f_w \}_{w\in \Lambda^*}$ are orthonormal in $L^2(\Omega),$ with respect 
to the normalized Lebesgue measure $\mu$ in $\Omega$ $$\int_\Omega f_w(u) 
\overline{f_{w'}(u)} d\mu(u)=\delta_{w,w'}$$ for $w,w'\in \Lambda^*$.

Now, we consider functions $\kappa=(\kappa_N)_{N\ge 0}$ where each 
$\kappa_N:\Lambda^* \longrightarrow \{0,1\}$ has compact support and we define 
the kernels $$K_N(u,v)=\sum_{w\in \Lambda^*} \kappa_N(w) e^{2\pi i \langle u-v, 
w \rangle },\;\;\;u,v\in \Omega,$$ and the corresponding determinantal point 
processes on the flat torus $\Omega$. For these processes (we are not going to 
distinguish between the integral operator defined by the kernel $K_N$ and the 
kernel itself) we get $$\mbox{tr}(K_N)=\int_\Omega K_N(u,u) d\mu(u)=\sum_{w\in 
\Lambda^*} \kappa_N(w)=\# \,\mbox{supp}\, \kappa_N,$$ points almost surely.

\subsubsection{Examples}

Let $\Lambda=\Z^d$. The flat torus $\mathbb{T}^d=\R^d/\Z^d$ has 
$\Omega=[0,1)^d$.

Consider
$$\kappa_N(x)=\chi_{[-N,N]^d}(x),\;\; x\in \R^d,$$
then for $u,v\in [0,1)^d$
$$K_N(u,v)=\sum_{\lambda\in \Z^d,\| \lambda\|_{\infty}\le N} e^{2\pi i \langle 
u-v,\lambda \rangle}=
\prod_{j=1}^d D_N(u_j-v_j),
$$
where 
$$D_N(x)=\frac{\sin[\left( N+\frac{1}{2}\right)x]}{\sin \left( 
\frac{x}{2}\right)},\;\;x\in \mathbb{T},$$
is the Dirichlet kernel.

If $N\in \N$ can be expressed as a sum $N=\lambda_1^2+\cdots +\lambda_d^2$ for 
$\lambda \in \Z^d$. We can define
$$K_N(u,v)=\sum_{\lambda\in \Z^d,\| \lambda\|_{2}^2= N} e^{2\pi i \langle 
u-v,\lambda \rangle},\;\;\;u,v\in[0,1)^d.
$$

Observe that $\{ e^{2\pi i \langle \cdot,\lambda \rangle} \}_{\lambda\in 
\Z^d,\| \lambda\|^2_2=N}$ span the eigenspace corresponding to
eigenvalue $-4\pi^2 N$ and it has dimension $r_d(N)$. Where $r_d(N)$ is the 
number of different ways that $N$ may be expressed as a sum of $d$ 
squares (the order in the sum of the squares is counted as distinct). For 
example $r_2(4)=4$ because
$4=0+2^2=0+(-2)^2=2^2+0=(-2)^2+0$. This corresponds to the four distinct points 
$(0,\pm 2)$ and $(\pm 2,0)$.

\subsection{Some known results about minimal periodic Riesz $s$-energy}

It was shown in \cite{HSSS15} that for $0<s<d$ there exists 
a constant $C_{s,d}$ independent of $\Lambda$ such that for $N\to \infty$
\begin{equation}\label{asymptotics}
 \mathcal{E}_{s,\Lambda}(N)=\frac{2\pi^{d/2}|\Lambda|^{-1}}{\Gamma\left(  
\frac{s}{2} \right)(d-s)} N^2+C_{s,d} |\Lambda|^{-s/d} N^{1+\frac{s}{d}}+
o(N^{1+\frac{s}{d}}).
\end{equation}
The constant $C_{s,d}$ above is not known (unless $d=1$). In \cite{HSSS15} the 
authors found an upper bound in terms of the Epstein zeta function.
Recall that for a lattice $\Lambda\subset \R^d,$ the Epstein zeta function 
$\zeta_\Lambda(s)$ defined by
$$\zeta_\Lambda(s)=\sum_{v\in \Lambda\setminus \{0\}} 
\frac{1}{|v|^s},\;\;\;s>d,$$
can be extended analytically (as in (\ref{analytic})) to $\C\setminus \{d\}$.
One can see easily that $\zeta_\Lambda(0)=-1$ and the residue of 
$\zeta_\Lambda(s)$ in $d$ 
is $2\pi^{d/2}/\Gamma(d/2)=\omega_{d-1}$. The result in \cite[Corollary 
3]{HSSS15} is that for $0<s<d$ 
$$C_{s,d}\le \inf_{\Lambda} \zeta_\Lambda(s),$$
where $\Lambda$ runs on the lattices with $|\Lambda|=1$. 
It has been conjectured (see \cite{BHS12}) that if $d=2,4,8$, or $24$, 
then	 $C_{s,d}= \zeta_{\Lambda_d}(s)$
where $\Lambda_d$ denotes (respectively) the hexagonal lattice, the $D_4$ 
lattice, the $E_8$ lattice
and the Leech lattice (scaled to have $|\Lambda_d|=1$).
When $d=1$ indeed $C_{s,1}=\zeta_\Z(s)=2\zeta(s)$. 
For $d=2$ it is known, due to the work of several authors, that $\inf_{\Lambda} 
\zeta_\Lambda(s)$
is attained for the triangular lattice, see \cite{Mon88} where the result is 
deduced from the corresponding result for theta functions.
It is observed in \cite{SS06} that
from Siegel's integration formula it follows that
$$\int  \zeta_\Lambda(s)d\lambda_d (\Lambda)=0,$$
where $d\lambda_d$ is the volume measure in the space of lattices, \cite[p. 172]{Ter88}. One deduces 
then that $C_{s,d}<0,$ although for large dimensions there are no examples 
providing negative bounds. Indeed, from \cite{SS06}, see also \cite[Theorem 
1]{Ter80}, all explicitly known lattices in large dimensions 
are such that the corresponding Epstein zeta functions
have a zero 
in $0<s<d,$ i.e. the analogue of the Riemann hypothesis fails for Epstein zeta 
functions, see Remark \ref{remark2}.

\section{Expected energies}

By Proposition \ref{th:determinantal} the expected periodic Riesz $s$-energy of 
$t_N=\tr (K_N)$ random points $\omega=(x_1,\dots,x_{t_N})$ drawn from the 
determinantal process defined by the kernel $K_N(u,v)$
is
$$\mathbb{E}_{\omega \in 
(\R^d)^{t_N}}(E_{s,\Lambda}(\omega))=\int_{\Omega^2}(K_N(u,u)K_N(v,v)-|K_N(u,
v)|^2) F_{s,\Lambda}(u-v)d\mu(u)d\mu(v).$$ 

It is easy to see, \cite{HSSS14}, that for $0<s<d$
$$\int_{\Omega^2}  
F_{s,\Lambda}(u-v)d\mu(u)d\mu(v)=\frac{2\pi^{d/2}|\Lambda|^{-1}}{\Gamma\left(  
\frac{s}{2} \right)(d-s)},$$
and therefore by translation invariance
\begin{equation} 		\label{energy_expr1}
 \mathbb{E}_{\omega\in (\R^d)^{t_N}}(E_{s,\Lambda}(\omega))=  
\frac{2\pi^{d/2}|\Lambda|^{-1}}{\Gamma\left(  \frac{s}{2} \right)(d-s)} {t_N}^2 
-
\int_{\Omega} |K_N(u,0)|^2 F_{s,\Lambda}(u) d\mu(u).
\end{equation}

Our first result is a nice closed expression for the integral above.

\begin{theorem}				\label{explicit}
 Let $\omega=(x_1,\ldots,x_{t_N})$ be drawn from the determinantal process on 
the flat torus $\R^d / \Lambda$
 given by the kernel 
 $$K_N(u,v)=\sum_{w\in \Lambda^*} \kappa_N(w) e^{2\pi i \langle u-v, w \rangle 
},$$
 with $\kappa_N(w) \in \{0,1\}$ for $w\in \Lambda^*$ and
 $\sum_{w\in \Lambda^*} \kappa_N(w)=t_N$.

Then, for $0<s<d$,
$$\mathbb{E}_{\omega \in (\R^d)^{t_N}}(E_{s,\Lambda}(\omega))
=\frac{2\pi^{d/2}}{\Gamma\left(  \frac{s}{2} \right)(d-s) |\Lambda|} 
({t_N}^2-{t_N})
-
\frac{\pi^{s-\frac{d}{2}} \Gamma\left(  \frac{d-s}{2} \right) }{ \Gamma\left(  
\frac{s}{2} \right) |\Lambda|}\sum_{\substack{w,w' \in \Lambda^* \\ w\neq w'}} 
\frac{\kappa_N(w)\kappa_N(w')}{|w-w'|^{d-s}}.$$
\end{theorem}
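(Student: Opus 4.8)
The plan is to start from formula \eqref{energy_expr1}, so that everything reduces to evaluating
$\int_\Omega |K_N(u,0)|^2 F_{s,\Lambda}(u)\,d\mu(u)$. First I would expand the square:
$$|K_N(u,0)|^2 = K_N(u,0)\overline{K_N(u,0)} = \sum_{w,w'\in\Lambda^*}\kappa_N(w)\kappa_N(w')\, e^{2\pi i \langle u, w-w'\rangle},$$
using $\kappa_N\in\{0,1\}$ so that $\kappa_N(w)^2=\kappa_N(w)$. The diagonal terms $w=w'$ contribute the constant $t_N$, and by the integral identity $\int_{\Omega^2}F_{s,\Lambda}(u-v)\,d\mu(u)d\mu(v)=\int_\Omega F_{s,\Lambda}(u)\,d\mu(u)=\frac{2\pi^{d/2}|\Lambda|^{-1}}{\Gamma(s/2)(d-s)}$ they account for the $-t_N$ correction to the leading $t_N^2$ term. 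So the task becomes computing, for $w\neq w'$ and with $\xi=w-w'\in\Lambda^*\setminus\{0\}$, the Fourier-type coefficient
$$\widehat{F_{s,\Lambda}}(\xi):=\int_\Omega e^{2\pi i\langle u,\xi\rangle}\,F_{s,\Lambda}(u)\,d\mu(u).$$

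The key step is to feed in the explicit expression \eqref{analytic} for $F_{s,\Lambda}$ and integrate term by term against $e^{2\pi i\langle u,\xi\rangle}$ over $\Omega=\R^d/\Lambda$ with $d\mu=|\Lambda|^{-1}\,du$. For the second (dual-lattice) sum this is immediate: by orthogonality of the characters $\{e^{2\pi i\langle u,w\rangle}\}_{w\in\Lambda^*}$, only the term $w=\xi$ survives, giving $|\Lambda|^{-1}\int_0^1 \pi^{d/2}t^{-d/2}e^{-\pi^2|\xi|^2/t}\,t^{s/2-1}/\Gamma(s/2)\,dt$. For the first sum $\sum_{v\in\Lambda}\int_1^\infty e^{-|u+v|^2 t}t^{s/2-1}/(\Gamma(s/2))\,dt$, the standard unfolding trick applies: summing over $v\in\Lambda$ and integrating $u$ over the fundamental domain $\Omega$ amounts to integrating over all of $\R^d$, and $\int_{\R^d}e^{2\pi i\langle u,\xi\rangle}e^{-|u|^2 t}\,du = (\pi/t)^{d/2}e^{-\pi^2|\xi|^2/t}$ (Gaussian Fourier transform). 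Dividing by $|\Lambda|$, this contributes $|\Lambda|^{-1}\int_1^\infty \pi^{d/2}t^{-d/2}e^{-\pi^2|\xi|^2/t}\,t^{s/2-1}/\Gamma(s/2)\,dt$. Adding the two pieces, the ranges $[0,1]$ and $[1,\infty)$ glue into a single integral over $(0,\infty)$:
$$\widehat{F_{s,\Lambda}}(\xi) = \frac{\pi^{d/2}}{|\Lambda|\,\Gamma(s/2)}\int_0^\infty t^{\frac{s-d}{2}-1}e^{-\pi^2|\xi|^2/t}\,dt.$$

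The final step is the substitution $r=\pi^2|\xi|^2/t$, which turns the integral into a Gamma function: $\int_0^\infty t^{\frac{s-d}{2}-1}e^{-\pi^2|\xi|^2/t}\,dt = (\pi^2|\xi|^2)^{\frac{s-d}{2}}\Gamma\!\left(\frac{d-s}{2}\right)$. Hence
$$\widehat{F_{s,\Lambda}}(\xi) = \frac{\pi^{d/2}\,\pi^{s-d}\,\Gamma\!\left(\frac{d-s}{2}\right)}{|\Lambda|\,\Gamma(s/2)\,|\xi|^{d-s}} = \frac{\pi^{s-d/2}\,\Gamma\!\left(\frac{d-s}{2}\right)}{|\Lambda|\,\Gamma(s/2)\,|\xi|^{d-s}},$$
which is exactly the coefficient appearing in the claimed formula. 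Substituting this back (with $\xi=w-w'$) and combining with the diagonal computation yields the theorem. I expect the only genuine subtlety to be justifying the term-by-term integration and the unfolding — i.e. the interchange of the sum over $\Lambda$, the integral over $\Omega$, and the integral over $t$ — but since $0<s<d$ and the representation \eqref{analytic} converges uniformly (as noted in the excerpt), Fubini–Tonelli applies with no difficulty; everything else is the routine Gamma-integral bookkeeping sketched above.
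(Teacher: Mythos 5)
Your proposal is correct and follows essentially the same route as the paper: expand $|K_N(u,0)|^2$, integrate the two pieces of the representation \eqref{analytic} term by term (unfolding the Gaussian sum over $\Lambda$ to $\R^d$, using character orthogonality for the dual sum), glue the $t$-integrals over $[1,\infty)$ and $[0,1]$ into $\int_0^\infty$, and evaluate it as a Gamma function, with the diagonal terms producing the $-t_N$ correction. The only quibble is cosmetic: in the dual-lattice sum the surviving index is $\eta=w'-w=-\xi$ rather than $\xi$, which is harmless since the weight depends only on $|\eta|$.
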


\begin{remark}
Observe that for $N$ random points chosen independently and uniformly in 
$\Omega$ (i.e. for the Poisson point process) the expected energy is given by
$$\mathbb{E}_{\mbox{uniform}}(E_{s,\Lambda})=\frac{2\pi^{d/2}|\Lambda|^{-1}}{
\Gamma\left(  \frac{s}{2} \right)(d-s)} (N^2-N),$$
so the improvement (lowering) in the determinantal case comes from the last 
summand above.
Therefore, to get a good upper bound for the minimal energy we 
want to maximize the sum $$\sum_{\substack{w,w' \in \Lambda^* \\ w\neq w'}} 
\frac{\kappa_N(w)\kappa_N(w')}{|w-w'|^{d-s}},\;\;\;\mbox{given }\;\;\sum_{w\in 
\Lambda^*}\kappa_N(w)=t_N.$$
This is not an easy task in general. For example, when $t_N=2$ this would lead 
to find the shortest non-zero vector in 
the lattice $\Lambda^*$ i.e.
$$m(\Lambda^*)=\min \{ |w|\;:\; w\in \Lambda^*\setminus \{ 0 \} \},$$
or equivalently, the density of the densest lattice sphere packing.
\end{remark}

\begin{proof}
To compute the integral in (\ref{energy_expr1}) we write
$$|K_N(u,0)|^2=\sum_{w,w' \in \Lambda^*} \kappa_N(w)\kappa_N(w') e^{2\pi i 
\langle u,w-w'  \rangle},$$ 
and using the expression for $F_{s,\Lambda}(u),$ where the sums
converge uniformly,
we get
$$\sum_{v\in \Lambda}\int_\Omega e^{-|u+v|^2 t}e^{2\pi i \langle u,w-w' 
\rangle}du=\int_{\R^d} e^{-|u|^2 t}e^{2\pi i \langle u,w-w' \rangle}du
=\left( \frac{\pi}{t} \right)^{d/2} e^{-\pi^2\frac{|w-w'|^2}{t}},$$
and
$$\frac{1}{|\Lambda|}\sum_{w,w' \in \Lambda^*} 
\kappa_N(w)\kappa_N(w')\sum_{\eta \in \Lambda^*\setminus \{0\}}
\int_0^1 \left( \frac{\pi}{t} \right)^{d/2} 
e^{-\pi^2\frac{|\eta|^2}{t}}\frac{t^{\frac{s}{2}-1}}{\Gamma\left( \frac{s}{2} 
\right)}
\left[ \int_\Omega e^{2\pi i \langle  u,w-w'+\eta \rangle } d\mu(u)\right] dt.$$
Observe that
$$\int_\Omega e^{2\pi i \langle  u,w-w'+\eta \rangle } 
d\mu(u)=\delta_{w'-w,\eta},$$
so
$$\frac{1}{|\Lambda|}\sum_{\substack{w,w' \in \Lambda^* \\ w\neq w'}} 
\kappa_N(w)\kappa_N(w')
\int_0^1 \left( \frac{\pi}{t} \right)^{d/2} 
e^{-\pi^2\frac{|w-w'|^2}{t}}\frac{t^{\frac{s}{2}-1}}{\Gamma\left( \frac{s}{2} 
\right)}dt.$$

Putting all together, and using that for $s<d$
$$\int_1^{+\infty} \left( \frac{\pi}{t} \right)^{d/2} t^{\frac{s}{2}-1}dt
=\frac{2 \pi^{d/2} }{d-s},$$
we get
$$\int_{\Omega} |K_N(u,0)|^2 F_{s,\Lambda}(u) 
d\mu(u)=\frac{1}{|\Lambda|}\sum_{w,w' \in \Lambda^* } \kappa_N(w)\kappa_N(w')
\int_1^{+\infty} \left( \frac{\pi}{t} \right)^{d/2} 
e^{-\pi^2\frac{|w-w'|^2}{t}}\frac{t^{\frac{s}{2}-1}}{\Gamma\left( \frac{s}{2} 
\right)}dt$$
$$+\frac{1}{|\Lambda|}\sum_{\substack{w,w' \in \Lambda^* \\ w\neq w'}} 
\kappa_N(w)\kappa_N(w')
\int_0^1 \left( \frac{\pi}{t} \right)^{d/2} 
e^{-\pi^2\frac{|w-w'|^2}{t}}\frac{t^{\frac{s}{2}-1}}{\Gamma\left( \frac{s}{2} 
\right)}dt$$
$$=\frac{2 \pi^{d/2} t_N }{(d-s)\Gamma\left( \frac{s}{2} 
\right)|\Lambda|}+\frac{1}{|\Lambda|}\sum_{\substack{w,w' \in \Lambda^* \\ 
w\neq w'}} \kappa_N(w)\kappa_N(w')
\left[ 
\int_0^{+\infty} \left( \frac{\pi}{t} \right)^{d/2} 
e^{-\pi^2\frac{|w-w'|^2}{t}}\frac{t^{\frac{s}{2}-1}}{\Gamma\left( \frac{s}{2} 
\right)}dt\right].
$$

This last integral converges for all $s<d,$ and using that (for $\alpha<-1$)

$$\int_0^{+\infty} e^{-c^2/t}t^{\alpha} dt=c^{2\alpha+2}\Gamma(-\alpha-1)$$

we get the result.
\end{proof}

Now we define a way to get different invariant kernels (by choosing different 
sequences of functions $\kappa$) and we estimate the 
corresponding expected energies.

\begin{definition}
 Let $\mathcal{D}\subset \R^d$ be an open bounded subset with boundary of 
measure zero and let $\Lambda\subset \R^d$ be a lattice. 
Define for $N\in \N$
the functions $\kappa^{\mathcal{D},\Lambda}=(\kappa_N)_{N\ge 0}$ where
$$
\kappa_N(w)=
\left\{
  \begin{array}{ll}
    1  & \mbox{if } w\in \Lambda^*\cap N^{1/d}\mathcal{D}, \\
    0 & \mbox{otherwise}. 
  \end{array}
\right.
$$ 
\end{definition}

\begin{proposition}
  Let $\Lambda\subset \R^d$ be a lattice and $\mathcal{D}\subset \R^d$ be an 
open bounded subset with boundary of measure zero
and such that $|\Lambda||\mathcal{D}|=1$.
 Let $\kappa^{\mathcal{D},\Lambda}=(\kappa_N)_{N\ge 0}$ be defined as above. 
Suppose that the trace of the corresponding kernel equals $t_N$ i.e.
$$\sum_{w\in \Lambda^*}\kappa_N(w)=t_N.$$ 
Then, for $0<s<d,$ if $\omega=(x_1, \dots ,x_{t_N})\in \Omega^{t_N}$
are $t_N$ points drawn from the determinantal process defined by 
$\kappa^{\mathcal{D},\Lambda}$
$$\mathbb{E}_{\omega\in (\R^d)^{t_N}}(E_{s,\Lambda}(\omega))
=\frac{2\pi^{d/2}|\Lambda|^{-1}}{\Gamma\left(  \frac{s}{2} \right)(d-s)} 
{t_N}^2 -
\frac{\pi^{s-\frac{d}{2}} \Gamma\left(  \frac{d-s}{2} \right) }{ \Gamma\left(  
\frac{s}{2} \right) |\Lambda|}
I_{\nu}^\mathcal{D} (d-s) t_N^{1+s/d}+o(t_N^{1+s/d}),$$
where 
$$ I_{\nu}^\mathcal{D} (t)=\int_{\mathcal{D}\times \mathcal{D}} 
\frac{1}{|x-y|^t} d\nu(x)d\nu(y),\;\;0<t<d.$$
The measure $\nu$ is a multiple of the Lebesgue measure such that if $\Omega^*$ 
is a fundamental domain for $\Lambda^*$ then $\nu(\Omega^*)=1$.
\end{proposition}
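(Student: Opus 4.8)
The plan is to feed the family $\kappa^{\mathcal D,\Lambda}$ into Theorem~\ref{explicit} and analyse the resulting lattice sum. Here $\kappa_N$ is the indicator of $\Lambda^*\cap N^{1/d}\mathcal D$, so Theorem~\ref{explicit} gives
$$\mathbb{E}_{\omega\in(\R^d)^{t_N}}(E_{s,\Lambda}(\omega))=\frac{2\pi^{d/2}}{\Gamma\!\left(\frac{s}{2}\right)(d-s)|\Lambda|}(t_N^2-t_N)-\frac{\pi^{s-\frac{d}{2}}\Gamma\!\left(\frac{d-s}{2}\right)}{\Gamma\!\left(\frac{s}{2}\right)|\Lambda|}\,S_N,\qquad S_N:=\sum_{\substack{w,w'\in\Lambda^*\cap N^{1/d}\mathcal D\\ w\neq w'}}\frac{1}{|w-w'|^{d-s}}.$$
Two reductions make the statement follow. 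First, by the standard lattice point count for the dilated body $N^{1/d}\mathcal D$, whose boundary is Lebesgue-null, together with $|\Lambda||\mathcal D|=1$, one has $t_N=\#(\Lambda^*\cap N^{1/d}\mathcal D)=\frac{|\mathcal D|}{|\Lambda^*|}N+o(N)=N+o(N)$; in particular $t_N\to\infty$, $t_N^{1+s/d}=N^{1+s/d}(1+o(1))$, and since $0<s<d$ the term $-t_N$ above is $O(t_N)=o(t_N^{1+s/d})$. Second, $\nu$ is by definition $|\Lambda|$ times Lebesgue measure (indeed $\nu(\Omega^*)=|\Lambda|\,|\Omega^*|=|\Lambda|/|\Lambda|=1$ since $|\Omega^*|=|\Lambda^*|=1/|\Lambda|$), so $I_\nu^{\mathcal D}(d-s)=|\Lambda|^2\int_{\mathcal D\times\mathcal D}|x-y|^{-(d-s)}\,dx\,dy$. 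Hence it suffices to prove $S_N=I_\nu^{\mathcal D}(d-s)\,N^{1+s/d}+o(N^{1+s/d})$.

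To that end I would rescale. Writing $x=N^{-1/d}w$, $y=N^{-1/d}w'$, the set $\Lambda^*\cap N^{1/d}\mathcal D$ becomes $L_N\cap\mathcal D$ with $L_N:=N^{-1/d}\Lambda^*$ a lattice of covolume $v_N:=N^{-1}|\Lambda^*|=1/(N|\Lambda|)\to0$, and $|w-w'|^{-(d-s)}=N^{-(d-s)/d}|x-y|^{-(d-s)}$, so
$$S_N=N^{-\frac{d-s}{d}}\sum_{\substack{x,y\in L_N\cap\mathcal D\\ x\neq y}}\frac{1}{|x-y|^{d-s}}=N^{-\frac{d-s}{d}}\,v_N^{-2}\Big(v_N^{2}\sum_{\substack{x,y\in L_N\cap\mathcal D\\ x\neq y}}\frac{1}{|x-y|^{d-s}}\Big).$$
Since $N^{-(d-s)/d}v_N^{-2}=N^{-(d-s)/d}N^{2}|\Lambda|^2=N^{1+s/d}|\Lambda|^2$, the desired asymptotics is equivalent to the Riemann-sum statement
$$v_N^{2}\sum_{\substack{x,y\in L_N\cap\mathcal D\\ x\neq y}}\frac{1}{|x-y|^{d-s}}\ \longrightarrow\ J:=\int_{\mathcal D\times\mathcal D}\frac{dx\,dy}{|x-y|^{d-s}}\qquad(N\to\infty),$$
the integral being finite because $d-s<d$ and $\mathcal D$ is bounded.

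To prove this I would fix $\epsilon>0$ and split the sum according to $|x-y|\ge\epsilon$ and $0<|x-y|<\epsilon$. For the first part the function $G_\epsilon(x,y)=|x-y|^{-(d-s)}\mathbf 1_{\mathcal D}(x)\mathbf 1_{\mathcal D}(y)\mathbf 1_{\{|x-y|\ge\epsilon\}}$ on $\R^{2d}$ is bounded, compactly supported, and Riemann integrable (its discontinuity set lies in $\{x\in\partial\mathcal D\}\cup\{y\in\partial\mathcal D\}\cup\{|x-y|=\epsilon\}$, which is Lebesgue-null because $\partial\mathcal D$ is); since $L_N\times L_N=N^{-1/d}(\Lambda^*\times\Lambda^*)$ is a lattice in $\R^{2d}$ of covolume $v_N^2$ with mesh tending to $0$, the corresponding lattice Riemann sums converge, $v_N^2\sum_{z\in L_N\times L_N}G_\epsilon(z)\to\int G_\epsilon=\int_{\mathcal D\times\mathcal D,\,|x-y|\ge\epsilon}|x-y|^{-(d-s)}\,dx\,dy$, and this increases to $J$ as $\epsilon\to0$. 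For the near-diagonal part I would use the elementary uniform bound: there is $C=C(\Lambda^*,d,s)$ with $\sum_{0<|w|\le R,\,w\in\Lambda^*}|w|^{-(d-s)}\le CR^{s}$ for all $R>0$ (obtained by comparison with $|\Lambda^*|^{-1}\int_{|z|\le R}|z|^{-(d-s)}\,dz$ away from the origin and absorbing the finitely many shortest vectors into $C$); rescaling gives $\sum_{0<|x-y|<\epsilon,\,y\in L_N}|x-y|^{-(d-s)}\le C\,v_N^{-1}\epsilon^{s}$ uniformly in $x$, and since $\#(L_N\cap\mathcal D)\le C'v_N^{-1}$ for large $N$, the near-diagonal contribution is $\le C''\epsilon^{s}$ after multiplication by $v_N^2$, uniformly in $N$. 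All terms being nonnegative, letting $N\to\infty$ and then $\epsilon\to0$ squeezes the left-hand side to $J$; plugging back, $S_N=|\Lambda|^2 J\,N^{1+s/d}+o(N^{1+s/d})=I_\nu^{\mathcal D}(d-s)\,t_N^{1+s/d}+o(t_N^{1+s/d})$, and together with the first paragraph this is the claimed expansion. I expect the only delicate point to be precisely this uniform control of the near-diagonal and boundary terms: the hypothesis $d-s<d$ makes the kernel locally integrable and hence controllable by the lattice bound above, while the null-boundary hypothesis on $\mathcal D$ is exactly what is needed both for $t_N=N+o(N)$ and for the Riemann integrability of the truncated kernel; the rest is substitution and bookkeeping.
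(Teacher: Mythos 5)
Your argument is correct and follows essentially the same route as the paper: substitute $\kappa^{\mathcal D,\Lambda}$ into Theorem~\ref{explicit}, use the lattice point count (with $|\Lambda||\mathcal D|=1$ and $\partial\mathcal D$ null) to get $t_N=N+o(N)$, and rescale the off-diagonal lattice sum so that it converges, as a Riemann sum, to $I_{\nu}^{\mathcal D}(d-s)$. The only difference is that you justify that limit in detail (the $\epsilon$-splitting, Riemann integrability of the truncated kernel, and the uniform near-diagonal bound $\sum_{0<|w|\le R}|w|^{-(d-s)}\le CR^{s}$), whereas the paper asserts the convergence after the change of variables $w=A^{-t}z$ without elaboration.
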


\begin{proof}
From Theorem \ref{explicit} we get that
\begin{align*}
\mathbb{E}_{\omega\in (\R^d)^{t_N}} & (E_{s,\Lambda}(\omega))
=\frac{2\pi^{d/2}|\Lambda|^{-1}}{\Gamma\left(  \frac{s}{2} \right)(d-s) } t_N^2
\\
&
-
\frac{\pi^{s-\frac{d}{2}} \Gamma\left(  \frac{d-s}{2} \right) }{ \Gamma\left(  
\frac{s}{2} \right) |\Lambda|}\sum_{\substack{w,w' \in \Lambda^* \\ w\neq w'}} 
\frac{\kappa_N(w)\kappa_N(w')}{|w-w'|^{d-s}}+o(t_N^{1+\frac{d}{s}}). 
\end{align*}

Observe that
$$\frac{t_N}{N}=\frac{1}{N}\sum_{w\in \Lambda^*} \kappa_N(w)=\frac{\#(\Z^d \cap 
N^{1/d} A^{t}\mathcal{D} )}{N}\longrightarrow |A^{t}\mathcal{D}|=|\Lambda| 
|\mathcal{D}|,\;N\to \infty.$$

For the other term,
$$\sum_{\substack{w,w' \in \Lambda^* \\ w\neq w'}} 
\frac{\kappa_N(w)\kappa_N(w')}{|w-w'|^{d-s}}=
\sum_{\substack{w,w' \in \Lambda^*,w\neq w' \\ w,w' \in N^{1/d}\mathcal{D} }} 
\frac{1}{|w-w'|^{d-s}}$$
$$
=N^{1+s/d}\frac{1}{N^2}
\sum_{\substack{z\neq z' \in \Z^d \\ z,z' \in N^{1/d} A^{t} \mathcal{D} }}
\frac{1}{\left| A^{-t} ( N^{-1/d}z-N^{-1/d}z' )   \right|^{d-s}},
$$
and 
$$\lim_{N\to \infty} \frac{1}{N^2}
\sum_{\substack{z\neq z' \in \Z^d \\ z,z' \in N^{1/d} A^{t} \mathcal{D} }}
\frac{1}{\left| A^{-t} ( N^{-1/d}z-N^{-1/d}z' )   \right|^{d-s}}=
\int_{A^t\mathcal{D}\times A^t\mathcal{D}}\frac{1}{|A^{-t}(x-y)|^{d-s}}dxdy$$

$$=
|\det A|^2 \int_{\mathcal{D}\times \mathcal{D}}\frac{1}{|x-y|^{d-s}}dxdy=
\int_{\mathcal{D}\times \mathcal{D}}\frac{1}{|x-y|^{d-s}}d\nu(x)d\nu(y)=
 I_{\nu}^\mathcal{D} (d-s).$$
\end{proof}

A natural question is now, given a fixed lattice $\Lambda,$ to find the optimal 
$\mathcal{D}\subset \R^d$ 
in the definition of $\kappa^{\mathcal{D},\Lambda}$.
By the result above, all we have to check is what is the domain giving the 
larger potential $I_{\nu}^\mathcal{D}$. 
The result follows from Riesz rearrangement inequality \cite[p. 87]{LL01}.

\begin{theorem}[Riesz rearrangement inequality]
Given $f,g,H$ nonnegative functions in $\R^d$ with $h(x)=H(|x|)$ symmetrically 
decreasing. Then 
$$\int_{\R^d}\int_{\R^d} f(x)g(y)H(|x-y|)dxdy\le \int_{\R^d}\int_{\R^d} 
f^*(x)g^*(y)H(|x-y|)dxdy,$$
where $f^*,g^*$ are the symmetric decreasing rearrangements of $f$ and $g$.
\end{theorem}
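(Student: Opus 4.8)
As this is the classical Riesz rearrangement inequality, the plan is simply to reproduce the standard symmetrization proof (see \cite[Section~3.7]{LL01}). First I would reduce to indicator functions via the layer-cake representation: writing $f=\int_0^\infty\chi_{\{f>a\}}\,da$, and likewise for $g$ and for $H$ viewed as a function on $[0,\infty)$, then substituting into both sides and applying Tonelli, the inequality reduces — using that the rearrangement of an indicator is the indicator of the centred ball of equal measure, that super-level sets commute with rearrangement, and that the super-level sets of the radial decreasing function $h$ are themselves centred balls — to the purely set-theoretic statement
$$ I(A,B,C):=\bigl|\{(x,y)\in\R^d\times\R^d:\ x\in A,\ y\in B,\ x-y\in C\}\bigr|\ \le\ I(A^*,B^*,C^*), $$
to be proved for all measurable $A,B,C\subset\R^d$ of finite measure, where $A^*$ denotes the centred ball with $|A^*|=|A|$.

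Next I would descend from $\R^d$ to $\R$ by Steiner symmetrization. For $e\in\S^{d-1}$ let $S_e$ be the Steiner symmetrization in the direction $e$, which replaces, on each line parallel to $e$, the slice of a set by the centred interval of the same length; it preserves Lebesgue measure and fixes centred balls. Writing $\R^d=\R e\oplus e^\perp$ and using Tonelli, $I(A,B,C)$ becomes an integral over pairs of points of $e^\perp$ of one-dimensional quantities of exactly the same form, so the one-dimensional case below, applied slice by slice, gives $I(A,B,C)\le I(S_eA,S_eB,S_eC)$. Iterating $S_e$ along a suitable sequence of directions, $S_{e_n}\cdots S_{e_1}A\to A^*$ in measure (a standard convergence statement for Steiner symmetrization), and $I$ is continuous along such sequences — for instance by dominated convergence, using $I(A,B,C)=\int_C|A\cap(B+t)|\,dt$ and $|A\cap(B+t)|\le\min(|A|,|B|)$ — so chaining the inequalities yields the $\R^d$ statement.

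It then remains to prove the \emph{one-dimensional} inequality: for $a,b,c\subset\R$ of finite measure,
$$ \int_\R\!\int_\R \chi_a(x)\,\chi_b(y)\,\chi_c(x-y)\,dx\,dy\ \le\ \int_\R\!\int_\R \chi_{a^*}(x)\,\chi_{b^*}(y)\,\chi_{c^*}(x-y)\,dx\,dy. $$
Here one may assume $c$ is a symmetric interval, since the third set entering the slicewise inequality of the previous step is a slice of a centred ball. Using $\int_\R\!\int_\R\chi_a(x)\chi_b(y)\chi_c(x-y)\,dx\,dy=\int_\R\chi_c(t)\,|a\cap(b+t)|\,dt$, one reduces by inner approximation to the case where $a$ and $b$ are finite unions of intervals, and then checks that sliding the component intervals toward the origin, and finally nesting them, does not decrease the integral of $|a\cap(b+t)|$ over a symmetric window. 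This combinatorial bookkeeping of interval configurations in dimension one is the genuine obstacle — everything else is formal — and since it is carried out in detail in \cite[Section~3.7]{LL01} I would invoke that reference. (For the application here only the case $f=g=\chi_{\mathcal{D}}$ with $H(r)=r^{-(d-s)}$ is needed, which then shows at once that the ball maximizes $I_{\nu}^{\mathcal{D}}(d-s)$ among domains $\mathcal{D}$ of prescribed volume.)
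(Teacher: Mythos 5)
The paper states this classical theorem without proof, simply citing \cite[p.~87]{LL01}, and your outline is the standard layer-cake plus Steiner-symmetrization proof with the only genuinely hard ingredient (the one-dimensional sliding argument) correctly deferred to that same reference, so it is correct and effectively the same approach as the paper's. One small remark: you announce the set-theoretic inequality $I(A,B,C)\le I(A^*,B^*,C^*)$ for arbitrary measurable $A,B,C$, but your slicewise argument (taking $c$ a symmetric interval) only covers the case where the third set is a centred ball --- which is all the theorem requires, since the superlevel sets of the symmetric decreasing $h$ are centred balls and these are fixed by every Steiner symmetrization, so either restrict the claimed set-theoretic statement accordingly or invoke the general one-dimensional lemma of \cite{LL01} for arbitrary $a,b,c$.
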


We apply the result for $f=g=\chi_{\mathcal{D}}$ getting 
$f^*=g^*=\chi_{\mathcal{D}^*}$ where
$\mathcal{D}^*$ is the open ball centered at the origin with 
$|\mathcal{D}^*|=|\mathcal{D}|$. If 
$\omega_{d-1}=d \frac{\pi^{d/2}}{\Gamma\left(\frac{d}{2}+1 \right)}$ is the 
surface measure of the unit ball $\mathbb{S}^{d-1}$
in $\R^d$ then
$$\mathcal{D}^*=\left\{ x\in \R^d \;:\; \omega_{d-1}|x|^d<d|\mathcal{D}| 
\right\}=B(0,r_d),\;\mbox{with}\;\;
r_d=\left( \frac{d|\mathcal{D}|}{\omega_{d-1}}\right)^{1/d}.$$
%
%
%
%

\subsection{An upper bound for the minimal periodic Riesz $s$-energy}

It is clear that 
$$\mathcal{E}_{s,\Lambda}(N)\le \mathbb{E}_{\omega\in 
(\R^d)^{N}}(E_{s,\Lambda}(\omega)),$$ 
and therefore from the results above we get that $C_{s,d}$  in 
(\ref{asymptotics}) satisfies, for $0<s<d,$
$$C_{s,d} \le -\frac{\pi^{s-\frac{d}{2}} \Gamma\left(  \frac{d-s}{2} \right) }{ 
\Gamma\left(  \frac{s}{2} \right) |\Lambda|^{1-\frac{s}{d}}}
I_{\nu}^\mathcal{D} (d-s),$$
where $\mathcal{D}$ is any bounded domain such that $|\mathcal{D}||\Lambda|=1$. 
By the discussion above the best choice is to
take the set $\mathcal{D}$ to be a ball.

\begin{proposition}				\label{proposition-bound}
Let $\Lambda\subset \R^d$ be a lattice and 
$$\mathcal{D}=B(0,r_d),\;\mbox{with}\;\;
r_d=\left( \frac{d|\Lambda|^{-1}}{\omega_{d-1}}\right)^{1/d}.$$ 
Then $|\mathcal{D}||\Lambda|=1$ and
$$\frac{\pi^{s-\frac{d}{2}} \Gamma\left(  \frac{d-s}{2} \right) }{ \Gamma\left( 
 \frac{s}{2} \right) |\Lambda|^{1-\frac{s}{d}}}
I_{\nu}^\mathcal{D} (d-s)=d\left[ 2\pi \left( 
\frac{d}{\omega_{d-1}}\right)^{1/d}\right]^s \int_0^\infty 
\frac{J_{d/2}^2(t)}{t^{1+s}}dt,$$
where according to \cite[p.47, (4)]{EOT54}
$$\int_0^\infty \frac{J_{d/2}^2(t)}{t^{1+s}}dt
=\frac{ \Gamma\left( \frac{d-s}{2} \right) }{2^{d+1}\Gamma\left( \frac{d}{2}+1 
\right)\Gamma\left( \frac{s}{2}+1 \right)}
{}_{2} F_1\left( \frac{d-s}{2},\frac{d+1}{2};d+1;1\right)$$
$$=
\frac{ \Gamma\left( \frac{d-s}{2} \right) \Gamma\left( d+1 \right) \Gamma\left( 
\frac{s+1}{2} \right) }{2^{d+1}\Gamma\left( \frac{d}{2}+1 \right)\Gamma\left( 
\frac{s}{2}+1 \right)
\Gamma\left( \frac{d+s}{2}+1 \right)\Gamma\left( \frac{d+1}{2} \right)}.
$$
\end{proposition}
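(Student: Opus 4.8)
The plan is to collapse the whole chain of identities onto a single one-dimensional Bessel integral, which is then evaluated by the Weber--Schafheitlin formula quoted from \cite{EOT54} together with Gauss's hypergeometric summation. First I would dispose of the elementary points. Since $|B(0,r_d)| = \omega_{d-1}r_d^{\,d}/d$ and $r_d^{\,d} = d|\Lambda|^{-1}/\omega_{d-1}$, we get $|\mathcal{D}| = |\Lambda|^{-1}$, i.e.\ $|\mathcal{D}||\Lambda| = 1$. A fundamental domain $\Omega^*$ for $\Lambda^*$ has Lebesgue measure $|\Lambda^*| = |\Lambda|^{-1}$, so the normalisation $\nu(\Omega^*) = 1$ forces $d\nu = |\Lambda|\,dx$, whence, after the change of variables $x\mapsto r_d x$, $y\mapsto r_d y$,
$$I_{\nu}^{\mathcal{D}}(d-s) = |\Lambda|^2\int_{\mathcal{D}\times\mathcal{D}}\frac{dx\,dy}{|x-y|^{d-s}} = |\Lambda|^2\, r_d^{\,d+s}\,C_{d,s},\qquad C_{d,s} := \int_{B(0,1)\times B(0,1)}\frac{dx\,dy}{|x-y|^{d-s}},$$
where $C_{d,s}$ is a finite integral because $0 < d-s < d$ and the domain is bounded.

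The substantive step is a Fourier-analytic evaluation of $C_{d,s}$. Writing $u = \chi_{B(0,1)} * \chi_{B(0,1)}$, which is bounded and supported in $B(0,2)$, we have $C_{d,s} = \int_{\R^d} u(x)\,|x|^{-(d-s)}\,dx$, and by Parseval's identity for tempered distributions this equals $\int_{\R^d}\widehat{u}(\xi)\,\widehat{|x|^{-(d-s)}}(\xi)\,d\xi$. Here $\widehat{u}(\xi) = \bigl|\widehat{\chi_{B(0,1)}}(\xi)\bigr|^2 = |\xi|^{-d}J_{d/2}^2(2\pi|\xi|)$, using the standard formula $\widehat{\chi_{B(0,1)}}(\xi) = |\xi|^{-d/2}J_{d/2}(2\pi|\xi|)$, while $\widehat{|x|^{-(d-s)}}(\xi) = \pi^{d/2-s}\,\Gamma(s/2)\,\Gamma\bigl(\tfrac{d-s}{2}\bigr)^{-1}|\xi|^{-s}$ for $0<s<d$; the integral on the right converges absolutely since $\widehat{u}$ is bounded near the origin and $O(|\xi|^{-d-1})$ at infinity. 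Passing to polar coordinates ($\int_{\R^d}F(|\xi|)\,d\xi = \omega_{d-1}\int_0^\infty F(\rho)\rho^{d-1}\,d\rho$) and substituting $t = 2\pi\rho$ then gives
$$C_{d,s} = \pi^{d/2-s}\,\frac{\Gamma(s/2)}{\Gamma\bigl(\tfrac{d-s}{2}\bigr)}\,\omega_{d-1}\,(2\pi)^s\int_0^\infty\frac{J_{d/2}^2(t)}{t^{1+s}}\,dt.$$
The Parseval step can be made rigorous by inserting a Gaussian regulariser $e^{-\varepsilon|x|^2}$ into the Riesz kernel and letting $\varepsilon\to 0$, but in this context it is a routine application.

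Finally I would assemble the pieces. Substituting the last display into $I_{\nu}^{\mathcal{D}}(d-s) = |\Lambda|^2 r_d^{\,d+s}C_{d,s}$, multiplying by $\pi^{s-d/2}\Gamma\bigl(\tfrac{d-s}{2}\bigr)\bigl(\Gamma(s/2)\,|\Lambda|^{1-s/d}\bigr)^{-1}$, and using $r_d^{\,d+s} = (d/\omega_{d-1})^{1+s/d}|\Lambda|^{-1-s/d}$, one checks that the factors $\pi^{\pm(d/2-s)}$ cancel, the two Gamma functions cancel, and all powers of $|\Lambda|$ and of $\omega_{d-1}$ collapse, leaving exactly $d\,\bigl[\,2\pi(d/\omega_{d-1})^{1/d}\,\bigr]^s\int_0^\infty J_{d/2}^2(t)\,t^{-1-s}\,dt$, which is the first asserted equality. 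For the remaining two equalities I would quote the Weber--Schafheitlin evaluation \cite[p.~47, (4)]{EOT54} of $\int_0^\infty J_{d/2}^2(t)\,t^{-1-s}\,dt$ in ${}_2F_1$ form, and then apply Gauss's summation theorem ${}_2F_1(a,b;c;1) = \Gamma(c)\Gamma(c-a-b)/\bigl(\Gamma(c-a)\Gamma(c-b)\bigr)$ with $a = (d-s)/2$, $b = (d+1)/2$, $c = d+1$: the hypothesis holds since $c-a-b = (s+1)/2 > 0$, and $c-a = \tfrac{d+s}{2}+1$, $c-b = \tfrac{d+1}{2}$ produce precisely the closed Gamma expression. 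I do not expect a genuine obstacle here; the main practical hurdle is the constant-chasing in the assembly step, with a secondary point being the verification of the hypotheses of the Weber--Schafheitlin formula and of Gauss's theorem.
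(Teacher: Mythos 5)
Your proposal is correct and follows essentially the same route as the paper: compute $I_\nu^{\mathcal D}(d-s)$ by Parseval, using the distributional Fourier transform of the Riesz kernel together with $\widehat{\chi_{B(0,r)}}(\xi)=r^{d/2}J_{d/2}(2\pi r|\xi|)/|\xi|^{d/2}$, and then reduce to the Weber--Schafheitlin integral from \cite{EOT54}. The only differences are cosmetic (rescaling to the unit ball before applying Parseval, and spelling out the Gauss summation step that the paper leaves implicit), and your constant-chasing checks out.
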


\begin{proof}
The proof is an easy computation. We take the normalization of the Fourier 
transform
$$\hat{f}(\xi)=\int_{\R^d} f(x) e^{-2\pi i \langle x,\xi \rangle} dx,$$
and then, in distributional sense,
if $f_s(x)=|x|^{-s},$
$$\hat{f_s}(\xi)=\frac{\pi^{s-\frac{d}{2}} \Gamma\left(  \frac{d-s}{2} \right) 
}{ \Gamma\left(  \frac{s}{2} \right)}\frac{1}{|\xi|^{d-s}}.$$

For any $K\subset \R^d$ we have 
$$\int_{K\times K} \frac{1}{|x-y|^s}dx dy=\int_{K} (f_s\ast \chi_K)(y) 
dy=(f_s\ast \chi_K,\chi_K)$$
$$=
(\hat{f_s} \widehat{\chi_K},\widehat{\chi_K})=\frac{\pi^{s-\frac{d}{2}} 
\Gamma\left(  \frac{d-s}{2} \right) }{ \Gamma\left(  \frac{s}{2} \right)}
\int_{\R^d} \frac{|\widehat{\chi_K}(\xi)|^2}{|\xi|^{d-s}} d\xi,$$
and the result follows from
$$\widehat{\chi_{B(0,r)}}(\xi)=r^{d/2} \frac{J_{d/2}(2\pi 
r|\xi|)}{|\xi|^{d/2}}.$$ 
\end{proof}

\begin{remark}			\label{remark2}
 The function
 $$A_{s,d}=-d\left[ 2\pi \left( 
\frac{d}{\omega_{d-1}}\right)^{1/d}\right]^s \frac{ \Gamma\left( \frac{d-s}{2} \right) \Gamma\left( d+1 \right) \Gamma\left( 
\frac{s+1}{2} \right) }{2^{d+1}\Gamma\left( \frac{d}{2}+1 \right)\Gamma\left( 
\frac{s}{2}+1 \right)
\Gamma\left( \frac{d+s}{2}+1 \right)\Gamma\left( \frac{d+1}{2} \right)}
,$$
 is such that $A_{0,s}=-1$ and the residue in $d$ is
$$\lim_{s\to d} (s-d)A_{s,d}=\omega_{d-1},$$
as the Epstein zeta functions, and is negative for $0<s<d$. Recall that it has been conjectured (see \cite{BHS12}) that $C_{s,d}$ 
is  given by $\zeta_{\Lambda_d}(s)$ for $d=2,4,8,24,$
where $\Lambda_d$ denotes (respectively) the hexagonal lattice, the $D_4$  lattice, the $E_8$ lattice and the Leech lattice 
(scaled to have $|\Lambda_d|=1$). 
The value of $A_{s,d}$ is bigger (i.e. worst) than these conjectured values for $C_{s,d}$ with
$d=2,4,8,24,$ see figure \ref{fig:1}. However, it is observed in \cite{SS06} that no 
negative bound was known for $C_{s,d}$ for large dimensions. Indeed,
it was proved in
\cite{Ter80} (see also \cite[4.4.4.]{Ter88}) that for $0<\delta<1$ and $d$ sufficiently large then 
$$\zeta_\Lambda(\delta d)>0,$$ for every lattice $\Lambda,$ such that the shortest non-zero vector in $\Lambda$ satisfies
$$m(\Lambda)\le \delta \sqrt{\frac{d}{\pi e}},$$
and this last condition is satisfied by all explicitly known lattices in large dimensions.
\end{remark}

\begin{figure}
\begin{center}
\includegraphics[width=.4\textwidth]{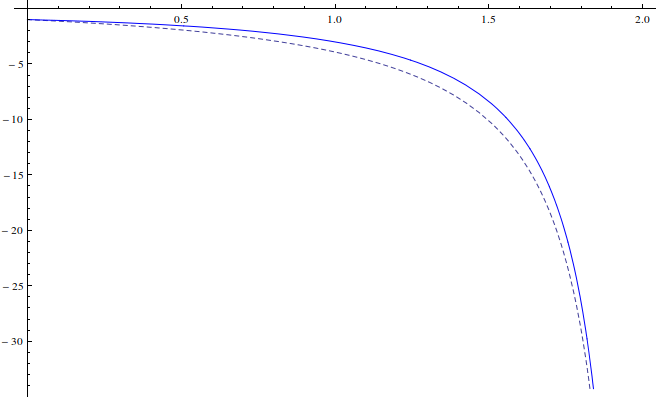}
\hskip 0.4cm
\includegraphics[width=.4\textwidth]{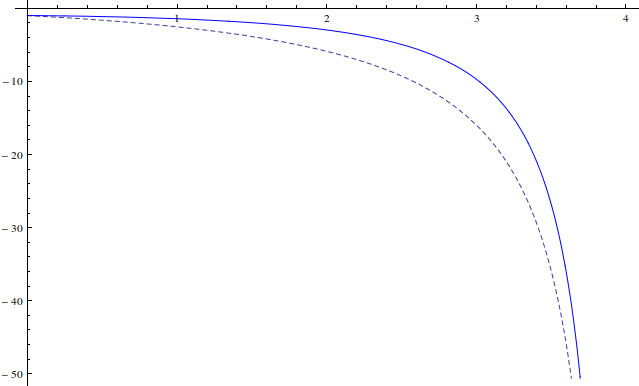}
\vskip 0.4cm
\includegraphics[width=.4\textwidth]{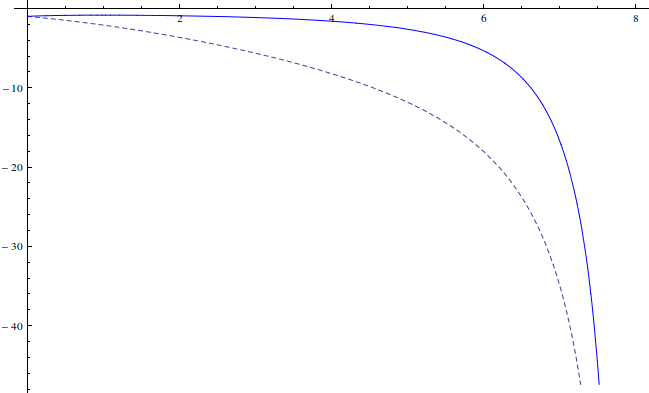}
\hskip 0.4cm
\includegraphics[width=.4\textwidth]{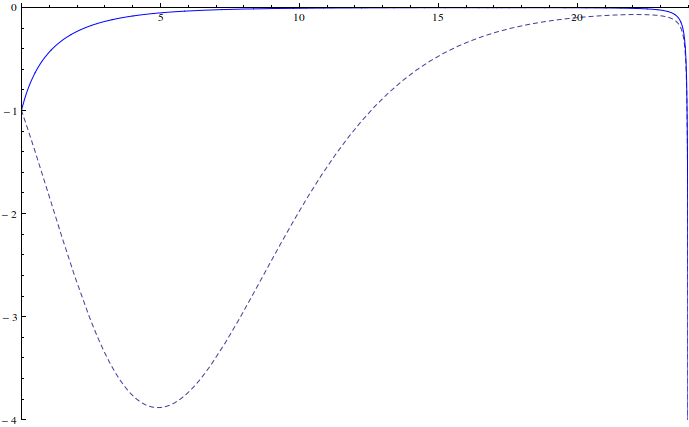}
\end{center}
\caption{$A_{s,d}$ dashed and $\zeta_{\Lambda_{d}}(s)$ solid for $d=2,4,8,24$}
\label{fig:1}
\end{figure}

\end{document}